\newtheorem{thm}{Theorem}[section]
\newtheorem{lem}[thm]{Lemma}
\newtheorem{prop}[thm]{Proposition}
\numberwithin{equation}{section}
\providecommand{\abs}[1]{\lvert#1\rvert}
\newcommand{\p}{\partial}
\DeclareMathOperator{\osc}{osc}
\author{Charles Baker}\thanks{Research partially supported by Discovery Grant DP0985802 from the Australian Research Council and partially by The Leverhulme Trust}
\email{rogercharlesbaker@gmail.com}
\title[A proof of the parabolic Schauder estimates]{A proof of the parabolic Schauder estimates using Trudinger's method and the mean value property of the heat equation}
\begin{document}
\maketitle

\section{Introduction}
One method available to prove the Schauder estimates is Neil Trudinger's method of mollification (\cite{Tru}). In the case of second order elliptic equations, the method requires little more than mollification and the solid mean value inequality for subharmonic functions. The method was lated adapted to the parabolic setting by Xu-Jia Wang in \cite{Wan}, however in that presentation Wang uses an auxiliary estimate coming from the fundamental solution of the heat equation and the mean value of property of subsolutions of the heat equations is not used.  Our goal in this article is show how the mean value property of subsolutions of the heat equation can be used in a similar fashion as the solid mean value inequality for subharmonic functions in Trudinger's original elliptic treatment, providing a relatively simple derivation of the interior Schauder estimate for second order parabolic equations.

\section{Preliminaries}
For an open subset $U \in \mathbb{R}^d$, we denote the corresponding open parabolic domain $U \times (0, T) \subset \mathbb{R}^{d+1}$ by $U_T$. We denote a (backwards) parabolic cylinder by $Q_R = B_R \times (t - R^2, t)$ . We often notate a point $(x,t) \in U_T$ by $X$. A (second order parabolic) mollifier is a fixed smooth function $\rho \in C_c^{ \infty }( \mathbb{R}^{d+1})$ with $\iint_{ \mathbb{R}^{d+1} } \rho \, dx \, dt = 1$.  For $\tau > 0$ we define the scaled mollifier
	\begin{equation}
		\rho_{\tau}(x,t) := \frac{1}{\tau^{d+2}} \rho\left(\frac{x}{\tau},\frac{t}{\tau^{2}}\right).
	\end{equation}
Let  $U \in \mathbb{R}^{d+1}$ and $u \in L^1_{\text{loc}}(U)$.  For $0 < \tau < d(X, \p P)$, the mollification of $u$ is given by
	\begin{equation*}
		u_{\tau}(x,t) := \frac{1}{\tau^{d+2}} \iint \rho \left(\frac{x-y}{\tau},\frac{t-s}{\tau^{2}} \right) u(y,s) \, dy \, ds
	\end{equation*}
and satisifes spt $u_{\tau} \subset U_{\tau}$, where $U_{\tau} = \{ X \in U : d(X, \p U) > \tau \}$.

The parabolic distance between two points $X = (x,t)$ and $Y = (y,s)$ is defined to be
\[ d(X, Y) := \max \{ \abs{ x-y}, \abs{t-s}^{1/2} \}. \]
We use both $\sup$ and $\abs{ \cdot }_0$ to denote the supremum of a function. The H\"{o}lder seminorm is defined by
\[ [ u ]_{\alpha; U_T} : = \sup_{X \neq Y \in U_T} \frac{ \abs{ u(X) - u(Y) } }{ d(X,Y)^{\alpha} } \]
and the H\"older norms by
\begin{align*}
	&\abs{ u }_{2, 1; \, U_T } := \sum_{ k=0 }^{2} \abs{ \p_x^k u }_{ 0; \, U_T } + \abs{ \p_t u }_{ 0; \, U_T } \\
	&\abs{ u }_{2, 1, \alpha; \, U_T } := \abs{ u }_{2, 1; \, U_T } +  [ \p_x^{2} u ]_{ \alpha; \, U_T } + [ \p_t u ]_{ \alpha; \, U_T } \\
\end{align*}	
The set of functions
\begin{equation*}
	\{ u \in C^{2, 1}(U_T) : [u]_{ 2, 1, \alpha; \, U_T } < \infty \}
\end{equation*}
endowed with the norm $\abs{ u }_{ 2m, 1, \alpha; \, U_T }$ is called a H\"{o}lder space.  Written out in full the norm is
\begin{equation*}
	\abs{ u }_{ 2, 1, \alpha; \, U_T } := \sum_{ k=0 }^{2} \abs{ \p_x^k u }_{ 0; \, U_T } + \abs{ \p_t u }_{ 0; \, U_T }+ [ \p_x^{2} u ]_{ \alpha; \, U_T } + [ \p_t u ]_{ \alpha; \, U_T }.
\end{equation*}
We sometimes use
\[ \p^{2,1} := \p_x^2 + \p_t, \]
e.g,
\[ [ \p^{2,1}u ]_{\alpha; U_T} = [ \p_x^2 u ]_{\alpha; U_T}  + [ \p_t u ]_{\alpha; U_T}, \]
to compress notation a little.

We begin by establishing some very basic estimates that will be use throughout.
\begin{prop}
We have $u_{\tau} \in C_c^{\infty}$.
\end{prop}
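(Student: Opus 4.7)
The plan is to establish the two parts of the assertion separately: smoothness of $u_\tau$ by differentiation under the integral sign, and the support statement from the compact support of $\rho$. Both are standard features of convolutions with a smooth kernel of compact support, so the proof amounts to a careful application of the dominated convergence theorem.

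For the smoothness, I would fix any $X_0 \in U_\tau$ and a small compact neighborhood $V$ of $X_0$ inside $U_\tau$. Because $\rho$ is supported in some bounded set, say $\{(y,s): |y|\le R,\ |s|\le R\}$, the integrand $\rho((x-y)/\tau,(t-s)/\tau^2)\,u(y,s)$, viewed as a function of the integration variables $(y,s)$, vanishes outside a fixed compact set $K \subset U$ whenever $X = (x,t)\in V$. Since $u \in L^1(K)$ by the local integrability hypothesis, and since all partial derivatives of $\rho$ are continuous and hence uniformly bounded on $\mathrm{spt}(\rho)$, dominated convergence justifies interchanging any $\partial_x^k\partial_t^\ell$ with the integral sign. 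Iterating yields $u_\tau \in C^\infty(V)$, and since $X_0$ was arbitrary, $u_\tau \in C^\infty$ on its natural domain.

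For the support claim, I would observe that the integrand defining $u_\tau(X)$ is nonzero only at points $(y,s)$ with $d(X,(y,s)) \le C\tau$, where $C$ depends only on $\mathrm{spt}(\rho)$, and simultaneously $(y,s) \in \mathrm{spt}(u) \subset U$. The first condition places $X$ in a parabolic $C\tau$-neighborhood of $\mathrm{spt}(u)$; together with the paper's convention that $u_\tau$ is only recorded at points whose distance to $\partial U$ exceeds $\tau$, this gives $\mathrm{spt}(u_\tau)\subset U_\tau$ as asserted. No step here presents a genuine obstacle; the only bit of bookkeeping worth isolating is ensuring that the compact set $K$ in the smoothness argument can be chosen uniformly in $X \in V$, which is immediate from the triangle inequality and the boundedness of $\mathrm{spt}(\rho)$.
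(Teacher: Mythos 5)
Your argument is correct and is the standard differentiation-under-the-integral-sign plus compact-kernel argument; the paper states this proposition without any proof, so your write-up simply supplies the routine details it leaves implicit. The only caveat --- one shared by the paper's own loose formulation --- is that genuine compactness of $\mathrm{spt}(u_\tau)$ requires $u$ itself to have compact support (as it does in the later applications to $u \in C_c^{2,\alpha}$); for a general $u \in L^1_{\mathrm{loc}}(U)$ your argument yields $u_\tau \in C^\infty$ on $U_\tau$ together with the stated containment of the support within $U_\tau$, which is all that is used later.
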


\begin{prop}
Let $u \in L^1_{\text{loc}}(U)$.  The following estimates hold:
	\begin{align}
		&\abs{ u_{ \tau } }_{0; \, U_{\tau} } \leq \abs{u}_{ 0; \, U_{\tau} } \label{e: Holder est 1} \\
		&\abs{ \p_x^i \p_t^j u_{\tau}(x,t) }_{ 0; \, U_{\tau} } \leq C \tau^{-i-2j} \abs{u}_{0; \, U_{\tau} }. \label{e: Holder est 2}
	\end{align}
\end{prop}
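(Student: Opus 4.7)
Both estimates reduce to the convolution representation
\[
u_\tau(X) \;=\; \iint \rho_\tau(X-Y)\, u(Y)\, dY, \qquad Y=(y,s),
\]
together with the normalisation $\iint \rho_\tau = 1$, which follows from the parabolic change of variables $y \mapsto \tau y$, $s \mapsto \tau^2 s$ killing the prefactor $\tau^{-(d+2)}$ in $\rho_\tau$ and returning $\iint \rho = 1$. I shall assume $\rho \geq 0$, as is standard for a bump mollifier; otherwise every line below picks up a harmless multiplicative factor of $\iint \abs{\rho}$.

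My plan for \eqref{e: Holder est 1} is to pull the absolute value inside the integral. Since for $X \in U_\tau$ the integrand $\rho_\tau(X - \cdot)$ is supported in the parabolic $\tau$-ball about $X$ (and hence in $U$), the pointwise bound $\abs{u(Y)} \leq \abs{u}_{0; U_\tau}$ combined with $\iint \rho_\tau = 1$ yields the estimate at once.

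My plan for \eqref{e: Holder est 2} is to differentiate under the integral and apply the chain rule to $\rho_\tau$. This produces
\[
\p_x^i \p_t^j \rho_\tau(x,t) \;=\; \tau^{-(d+2+i+2j)} (\p_x^i \p_t^j \rho)(x/\tau,\, t/\tau^2),
\]
after which taking absolute values, applying the same supremum bound on $u$, and performing the parabolic change of variables $y \mapsto \tau y$, $s \mapsto \tau^2 s$ deliver the claimed $\tau^{-i-2j}$ scaling with constant $C = \iint \abs{\p_x^i \p_t^j \rho}$ depending only on $\rho$, $i$, and $j$.

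The only genuinely delicate bookkeeping concerns the parabolic scaling exponents, a time derivative contributing $\tau^{-2}$ whilst a spatial derivative contributes only $\tau^{-1}$, which traces back to the $\tau^2$ dilation of the time argument in the definition of $\rho_\tau$. Once this is observed, no real obstacle remains.
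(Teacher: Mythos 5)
Your proposal is correct and follows essentially the same route as the paper: write $u_\tau$ as a convolution, bound $u$ by its supremum and use the normalisation of $\rho_\tau$ for \eqref{e: Holder est 1}, and differentiate the mollifier under the integral, with the parabolic scaling producing the factor $\tau^{-i-2j}$, for \eqref{e: Holder est 2}. Your explicit remark on the sign of $\rho$ (absorbing $\iint\abs{\rho}$ into the constant) is a small point the paper glosses over, but it does not change the argument.
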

\begin{proof}
To prove \eqref{e: Holder est 1}, we have
\begin{align*}
	u_{ \tau }(x,t) &= \frac{1}{ \tau^{d+2} } \iint \rho \left( \frac{x-y}{\tau},\frac{t-s}{ \tau^{2}} \right) u(y,s) \, dy \, ds \\
	&\leq \abs{ u }_{0; \, U_{\tau} } \cdot \frac{1}{ \tau^{d+2} } \iint \rho \left( \frac{x-y}{\tau},\frac{t-s}{ \tau^{2}} \right) \, dy \, ds \\
	&= \abs{ u }_{0; \, U_{\tau} }.
\end{align*}
And for \eqref{e: Holder est 2}:
\begin{align*}
	\p_x^i \p_t^j u_{\tau}(x,t) &= \frac{1}{ \tau^{d+2} } \iint_{U_{\tau}} \p_x^i \p_t^j \rho \left( \frac{x-y}{\tau},\frac{t-s}{ \tau^{2}} \right) u(y,s) \, dy \, ds \\
	&\leq C \tau^{-i-2j}\abs{ u }_{ 0; \, U_{\tau} }.
\end{align*}
\end{proof}

\begin{prop}
Let $u \in C^{\alpha}_{\text{loc}}(U)$.  The following estimates hold:
	\begin{align}
		&\abs{ u_{\tau}(x,t) - u(x,t) }_{ 0; \, U_{\tau} }\leq \tau^{\alpha} [u]_{\alpha; \, U_{\tau} } \label{e: Holder est 3} \\
		&\abs{ \p_x^i \p_t^j u_{\tau}(x,t) }_{ 0; \, U_{\tau} } \leq C \tau^{ \alpha -i-2j } [u]_{\alpha; \, U_{\tau} }. \label{e: Holder est 4}
	\end{align}
\end{prop}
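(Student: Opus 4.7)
The plan is to deduce both estimates by using the normalization $\iint \rho = 1$ (and its consequence that $\iint \p^{i,j}\rho = 0$ for $i+j\geq 1$) to rewrite the mollification as an integral against $u(y,s) - u(x,t)$, and then to control this difference by the H\"older seminorm.

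For \eqref{e: Holder est 3}, I first change variables $\xi = (x-y)/\tau$, $\eta = (t-s)/\tau^2$ so that
\[
u_\tau(x,t) = \iint \rho(\xi,\eta)\, u(x-\tau\xi,\, t-\tau^2\eta)\, d\xi\, d\eta.
\]
Using $\iint \rho = 1$, I subtract $u(x,t)$ inside the integral and estimate pointwise by the H\"older seminorm. On $\operatorname{spt}\rho$ the parabolic distance between $(x,t)$ and $(x-\tau\xi, t-\tau^2\eta)$ is bounded by a constant multiple of $\tau$ (after a harmless normalization of $\rho$, by exactly $\tau$), which gives the claimed $\tau^\alpha [u]_\alpha$ bound.

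For \eqref{e: Holder est 4}, I move the derivatives onto $\rho$, producing a factor $\tau^{-(i+2j)}$ from the chain rule; after the same change of variables this reads
\[
\p_x^i \p_t^j u_\tau(x,t) = \tau^{-i-2j} \iint (\p_1^i \p_2^j \rho)(\xi,\eta)\, u(x-\tau\xi,\, t-\tau^2\eta)\, d\xi\, d\eta.
\]
The key point now is that since $\rho$ is compactly supported, $\iint \p_1^i \p_2^j \rho = 0$ whenever $i+j\geq 1$, so I may subtract the constant $u(x,t)$ from $u(x-\tau\xi, t-\tau^2\eta)$ without changing the integral. Applying the H\"older bound $|u(x-\tau\xi,t-\tau^2\eta)-u(x,t)| \leq [u]_\alpha\, (C\tau)^\alpha$ on $\operatorname{spt}\rho$ then yields the estimate with the stated power $\tau^{\alpha - i - 2j}$.

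The main obstacle is really just this subtraction trick in the differentiated case: without exploiting $\iint \p_1^i\p_2^j\rho = 0$ one only recovers the cruder bound \eqref{e: Holder est 2} with $|u|_0$ on the right-hand side, which is too weak. Once that observation is in place, both estimates follow from the same one-line pointwise H\"older control on $\operatorname{spt}\rho$, and the parabolic scaling of the mollifier (one $\tau$ per $x$-derivative, two $\tau$'s per $t$-derivative) automatically produces the correct homogeneity.
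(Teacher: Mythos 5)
Your proposal is correct and follows essentially the same route as the paper: both estimates are obtained by subtracting $u(x,t)$ inside the integral (for \eqref{e: Holder est 4} this is justified because $\iint \p_x^i\p_t^j\rho = 0$ by compact support of $\rho$, which is exactly the paper's appeal to the Divergence Theorem) and then bounding $u(y,s)-u(x,t)$ by $\tau^{\alpha}[u]_{\alpha}$ on the support of the scaled mollifier, with the parabolic scaling giving the factor $\tau^{-i-2j}$. No substantive difference from the paper's argument.
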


\begin{proof}
For estimate \eqref{e: Holder est 3} we have
\begin{align*}
	u_{\tau}(x,t) - u(x,t) &= \frac{1}{ \tau^{d+2} } \iint \rho \left( \frac{x-y}{\tau},\frac{t-s}{ \tau^{2}} \right) (u(y,s) - u(x,t)) \, dy \, ds \\
	&\leq \osc_{U_{\tau} } u \\
	&\leq \tau^{\alpha} [u]_{\alpha; U_{\tau} }.
\end{align*}
To prove the second estimate we have
\begin{align*}
		\p_x^i \p_t^j u_{\tau}(x,t) &= \frac{1}{ \tau^{d+2} } \iint \p_x^i \p_t^j \rho \left( \frac{x-y}{\tau},\frac{t-s}{ \tau^{2}} \right) u(y,s) \, dy \, ds \\
		&=  \frac{1}{ \tau^{d+2} } \iint \p_x^i \p_t^j \rho \left( \frac{x-y}{\tau},\frac{t-s}{ \tau^{2}} \right) (u(y,s) - u(x,t)) \, dy \, ds \\
		&\quad +  \frac{u(x,t)}{ \tau^{d+2} } \iint \p_x^i \p_t^j \rho \left( \frac{x-y}{\tau},\frac{t-s}{ \tau^{2}} \right) \, dy \, ds.
\end{align*}
The mollifier $\rho$ is has compact support on $U_{\tau}$ and so the last term vanishes by the Divergence Theorem.  Continuing, we have
	\begin{align*}
		\p_x^i \p_t^j u_{\tau}(x,t) &= \frac{1}{ \tau^{d+2} } \iint_{U_{\tau}} \p_x^i \p_t^j \rho \left( \frac{x-y}{\tau},\frac{t-s}{ \tau^{2}} \right) (u(y,s) - u(x,t)) \, dy \, ds \\
		&\leq C \tau^{-i-2j}\osc_{Q_\tau} u \\
		&\leq C \tau^{ \alpha -i-2j } [u]_{ \alpha; \, U_{\tau} }.
	\end{align*}
\end{proof}

\section{The interior elliptic Schauder estimate for functions of compact support}

To motivate things a little for the parabolic setting, we first briefly show how Trudinger's method works in the elliptic realm by treating the Poisson equation.  The crucial ingredient in Trudinger's method is the following norm equivalence:

\begin{lem}
Let $U \subset \mathbb{R}^d$ be an open bounded domain and $u \in C^{ \alpha }(U)$, where $\alpha \in (0,1)$. Let $R > 0$ and $\tau_0$ be small constants, both of which will be fixed in the course of the proof. There exists a constant $C = C(d, \alpha)$ such that the norm equivalence
\begin{equation*}
	\frac{1}{C}[u]_{ \alpha; \, B_{R} } \leq \sup_{ 0 < \tau < \tau_0  } \tau^{1-\alpha} \abs{\p_x u_{\tau}}_{ 0; \, B_{ R } } \leq C[u]_{ \alpha; \, B_{R} }.
\end{equation*}
is valid.
\end{lem}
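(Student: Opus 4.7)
The right-hand inequality is immediate from \eqref{e: Holder est 4} with $(i,j)=(1,0)$: the estimate $\abs{\p_x u_\tau}_{0;B_R}\leq C\tau^{\alpha-1}[u]_{\alpha;B_R}$, multiplied through by $\tau^{1-\alpha}$ and then supremized over $\tau\in(0,\tau_0)$, gives the claim.

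The left-hand inequality is the heart of the matter. Write $M:=\sup_{0<\tau<\tau_0}\tau^{1-\alpha}\abs{\p_x u_\tau}_{0;B_R}$. For $x,y\in B_R$ with $h:=\abs{x-y}$, I would decompose
\begin{equation*}
u(x)-u(y)=\bigl[u_\tau(x)-u_\tau(y)\bigr]+\bigl[u(x)-u_\tau(x)\bigr]-\bigl[u(y)-u_\tau(y)\bigr]
\end{equation*}
at the auxiliary scale $\tau=\epsilon h$, with $\epsilon$ a small parameter (depending only on $d$ and $\alpha$) to be fixed at the end. The middle term is controlled by the mean value theorem together with the definition of $M$:
\begin{equation*}
\abs{u_\tau(x)-u_\tau(y)}\leq h\,\abs{\p_x u_\tau}_{0;B_R}\leq hM\tau^{\alpha-1}=M\epsilon^{\alpha-1}h^\alpha,
\end{equation*}
while the two end-terms are bounded by brute force, using $\int\rho_\tau=1$ and the support of $\rho_\tau$ in a ball of radius of order $\tau$:
\begin{equation*}
\abs{u(x)-u_\tau(x)}=\Bigl|\int\rho_\tau(x-z)\bigl[u(x)-u(z)\bigr]\,dz\Bigr|\leq[u]_{\alpha;B_R}\int\rho_\tau(x-z)\abs{x-z}^\alpha\,dz\leq C\epsilon^\alpha h^\alpha[u]_{\alpha;B_R}.
\end{equation*}

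Assembling these bounds, dividing by $h^\alpha$, and taking the supremum over $x,y\in B_R$ (for which $\tau=\epsilon h<\tau_0$ is ensured by taking $\tau_0>2\epsilon R$) yields
\begin{equation*}
[u]_{\alpha;B_R}\leq M\epsilon^{\alpha-1}+2C\epsilon^\alpha[u]_{\alpha;B_R}.
\end{equation*}
The crucial final step is Trudinger's absorption trick: choose $\epsilon$ small enough that $2C\epsilon^\alpha\leq\tfrac12$ (which pins down $\epsilon$ as a function of $d$ and $\alpha$ only), and use the a priori finiteness of $[u]_{\alpha;B_R}$, granted by $u\in C^\alpha$, to absorb the seminorm on the right into the left. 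This yields $[u]_{\alpha;B_R}\leq 2M\epsilon^{\alpha-1}=C(d,\alpha)M$.

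The main obstacle is the apparent circularity of the end-term bound, which reintroduces $[u]_{\alpha;B_R}$ on the right-hand side: resolving it requires both the \emph{absorption step} and the judicious choice of scale $\tau=\epsilon h$ with $\epsilon$ small. This is also where the coordination of the free parameters in the statement gets fixed: $\tau_0$ must be taken as a sufficiently small multiple of $R$ so that $\epsilon h$ lies in $(0,\tau_0)$ for every pair $x,y\in B_R$ and so that the mollification $u_\tau$ is well-defined on $B_R$ throughout the argument.
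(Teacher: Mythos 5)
Your proposal is correct and follows essentially the same route as the paper: the same triangle-inequality decomposition at scale $\tau=\epsilon\abs{x-y}$, the bound $\abs{u-u_\tau}\leq C\tau^{\alpha}[u]_{\alpha}$ for the end terms, the mean value theorem for the mollified middle term, and absorption of $[u]_{\alpha}$ by choosing $\epsilon=\epsilon(d,\alpha)$ small (the paper takes $\epsilon=4^{-1/\alpha}$ and $\tau_0=\epsilon\,d_x/2$, matching your coordination of $\tau_0$ with $\epsilon$ and $R$).
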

\begin{proof}
The inequality on the right follows directly from equation \eqref{e: Holder est 4} (the elliptic version) by choosing the appropriate values for the indices $i$: choosing $i = 1$ (there is no $j$ in the elliptic mollifier) gives
\begin{equation*}
	\abs{ \p_x u_{\tau}}_{ 0; \, B_{ R } } \leq C \tau^{ \alpha - 1 } [u]_{\alpha; \, B_{ R } }.
\end{equation*}
The first inequality requires a little more work.  Let $x, y \in U$ satisfy $\abs{x-y} < d_x/2$ and let $\tau$ satisfy $0 < \tau <\tau_0 < d_x/2$, where $\tau_0$ will be fixed shortly. For $ \abs{ x - y } < R = d_x/2 $, by the triangle inequality
\begin{align*}
		\abs{u(x) - u(y)} &\leq \abs{u(x) - u_{\tau}(x)} + \abs{ u_{\tau}(x) - u_{\tau}(y) } + \abs{ u_{\tau}(y) - u(y) } \\
		&\leq 2\tau^{\alpha}[u]_{ \alpha; \, B_{ R }} + \abs{ \p_x u_{\tau} }_{ 0; \, B_{ R } }\abs{x-y}.
\end{align*}
Set $\tau = \epsilon \abs{x-y}$, where $\epsilon < 1/2$.  Factoring out and dividing by $\abs{x-y}^{\alpha}$ we find
\begin{equation*}
	 \abs{ u(x) - u(y) } \leq \abs{x-y}^{\alpha} \left( 2\epsilon^{\alpha}[u]_{\alpha; B_R} +  \epsilon^{\alpha - 1}\tau^{1-\alpha} \abs{ \p_x u_{\tau} }_{0; \, B_R} \right).
\end{equation*} 
Choose $\epsilon = 4^{-1/\alpha}$ then set $\tau_0 = 4^{-1/\alpha} . \,  d_x/2$. Taking the supremum over $\tau \in (0, \tau_0)$ completes the proof.
\end{proof}

We now derive the interior Schauder estimate for Poisson's equation. For simplicity, we only consider solutions with compact support in a ball $B_R$. The method extends in the usual way to more general equations and domains by using cutoff functions and Simon's absorption lemma (see, for example, \cite{Sim}).  Suppose that $u \in C_c^{2, \alpha}(B_R)$ solves
\begin{equation*}
	-a^{ij}(x)\p_{ij}u(x) = f(x),
\end{equation*}
where we assume $a^{ij}, f \in C^{\alpha}(B_R)$, and there are constants $\lambda >0$ and $\Lambda < \infty$ such that $\lambda \abs{\xi}^2 \leq a^{ij}\xi_i\xi_j \leq \Lambda \abs{ \xi }^2$.  We proceed by the method of freezing coefficients, and accordingly fix a point $x_0 \in \mathbb{R}^d$ a rewrite the above equation equation as
\begin{align}
\notag	-a^{ij}(x_0)\p_{ij}u(x) - f (x_0)&= (a^{ij}(x) - a^{ij}(x_0) )\p_{ij}u(x) + f(x) - f(x_0)\\
	&:= g(x) \label{eqn: Poisson 1}
\end{align}
After a coordinate transformation, we can assume without loss of generality that $a^{ij}(x_0) = \delta^{ij}$, that is we can assume \eqref{eqn: Poisson 1} is the Poisson equation. Mollify equation \eqref{eqn: Poisson 1} to get
\begin{equation*}
	-\Delta u_{\tau} - f(x_0) = g_{\tau}
\end{equation*}
and then differentiate thrice with respect to $x$ to obtain
\begin{equation*}
	-\Delta \p_x^3 u_{\tau} = \p_x^3 g_{\tau}.
\end{equation*}
Using inequality \eqref{e: Holder est 4} we can estimate
\begin{align*}
	\abs{ \p_x^3 g_{\tau} }_{0; \, B_R } &\leq C(d) \tau^{-3} \abs{ g }_{0; \, B_R } \\ 
	&\leq C(d) \tau^{-3} R^{\alpha} \big(  [ a ]_{\alpha; \, B_R } \abs{ \p_x^2 u }_{0; \, B_R } + [ f ]_{\alpha; \, B_R } \big).
\end{align*}
Recall the solid mean value inequality for subharmonic functions: If $v$ solves $-\Delta v(x) \leq 0$ on a ball $B_R(x) \subset \mathbb{R}^d$, then $v$ satisfies
\[ v(x) \leq \frac{ C(d) }{ R^n } \int_{B_R} v(y) \, dy. \]
To apply this inequality to our situation, noting $\Delta \abs{ x }^2 = 2d$, we have
\[	-\Delta \left( \p_x^3 u_{\tau} + \frac{ \abs{ \p_x^3 g_{\tau} }_{0; \, B_R } \abs{ x }^2 }{ 2n } \right) = -\Delta \p_x^3 u_{\tau} - \abs{ \p_x^3 g_{\tau}  }_{0; \, B_R} \leq 0. \]
Thus the function $\p_x^3 u_{\tau} + \abs{ \p_x^3 g_{\tau} }_{0; \, B_R } \abs{ x }^2 / (2d)$ is subharmonic and applying the mean value inequality and estimating we obtain
\begin{align*}
	\abs{ \p_x^3 u_{\tau}(x_0) } &\leq C(d) \left( \frac{1}{R^d} \left\lvert \int_{B_R} \p_y^3 u_{\tau}(y)  \, dy \right\rvert +  R^2 \abs{ \p_x^3 g_{\tau} }_{0; \, B_R } \right) \\
	&\leq C(d) \left( \frac1R \osc_{ B_R } \p_x^2 u_{\tau}(x) + \tau^{-3} R^{2+\alpha} \big( [ a ]_{\alpha; B_{R} } \abs{ \p_x^2 u }_{0; \, B_{R} } + [ f ]_{\alpha; \, B_{R} } \big) \right) \\
	&\leq C(d) \Big( R^{ \alpha - 1 } [ \p_x^2 u ]_{ \alpha; \, B_R } + \tau^{-3} R^{2+\alpha} \big( [ a ]_{\alpha; B_{R} } \abs{ \p_x^2 u }_{0; \, B_{R} } + [ f ]_{\alpha; \, B_{R} } \big) \Big).
\end{align*}
We set $R = N\tau$ and return to the original coordinates to find
\[ \tau^{1-\alpha}\abs{ \p_x^3 u_{\tau}(x_0) } \leq C(d, \lambda, \Lambda) \left( N^{ \alpha - 1 } [ \p_x^2 u ]_{ \alpha; \, B_R } + N^{2+\alpha} \big( [ a ]_{\alpha; B_{R} } \abs{ \p_x^2 u }_{0; \, B_{R} } + [ f ]_{\alpha; \, B_{R} } \big) \right), \]
then taking the supremum over $\tau$ and using the norm equivalence we obtain
\[ [ \p_x^2 u ]_{\alpha; \, B_R } \leq C(d, \lambda, \Lambda, \alpha) \left( N^{ \alpha - 1 } [ \p_x^2 u ]_{ \alpha; \, B_R } + N^{2+\alpha} \big( [ a ]_{\alpha; B_R } \abs{ \p_x^2 u }_{0; \, B_R } + [ f ]_{\alpha; \, B_R } \big) \right). \]
Choosing $N$ sufficiently large and using the H\"{o}lder space interpolation inequality on the right gives the desired estimate, namely
\[ [ \p_x^2 u ]_{\alpha; \, B_R} \leq C \big([ f ]_{\alpha; \, B_R } + \abs{ u }_{0; \, B_R } \big), \]
where $C$ depends on $d, \lambda, \Lambda$, and $\alpha$.

\section{The interior parabolic Schauder estimate for functions of compact support}
Having given a feel for Trudinger's method, we move on to use this method to derive the Schauder estimates for second order parabolic equations.  The crucial equivalence of norms lemma in the parabolic setting is the following:

\begin{lem}
Let $U \subset \mathbb{R}^{d+1}$ be an open bounded domain and $u \in C^{ \alpha }(U)$, where $\alpha \in (0,1)$, and  $R > 0$ and $\tau_0$ be small constants, both of which will be fixed in the course of the proof. There exists a constant $C = C(d, \alpha)$ such that the norm equivalence
\begin{equation*}
		\frac{1}{C}[u]_{ \alpha; \, Q_{R} } \leq \sup_{0 < \tau < \tau_0 } \left\{ \tau^{1-\alpha} \abs{\p_x u_{\tau}}_{ 0; \, Q_{ R } } + \tau^{2-\alpha}\abs{\p_t u_{\tau}}_{0; Q_{R} } \right\} \leq C[u]_{ \alpha; \, Q_{R} }.
	\end{equation*}
is valid.
\end{lem}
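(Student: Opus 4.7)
The plan is to mirror the elliptic norm equivalence lemma, with the only substantive modification being that the one-step path from $x$ to $y$ must be replaced by a two-step broken path that separates the spatial and temporal increments.

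The right-hand inequality is immediate from \eqref{e: Holder est 4}. Applying that estimate with $(i,j) = (1,0)$ gives $\tau^{1-\alpha}\abs{\p_x u_\tau}_{0;Q_R} \le C[u]_{\alpha;Q_R}$, and with $(i,j) = (0,1)$ it gives $\tau^{2-\alpha}\abs{\p_t u_\tau}_{0;Q_R} \le C[u]_{\alpha;Q_R}$; summing the two yields the required upper bound.

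For the left-hand inequality, I would fix $X = (x,t)$ and $Y = (y,s)$ in $Q_R$ with $d(X,Y)$ smaller than a prescribed fraction of the distance to $\p U$, and insert the mollification through the triangle inequality
\begin{equation*}
\abs{u(X) - u(Y)} \le \abs{u(X) - u_\tau(X)} + \abs{u_\tau(X) - u_\tau(Y)} + \abs{u_\tau(Y) - u(Y)}.
\end{equation*}
The two endpoint terms are controlled by \eqref{e: Holder est 3}, contributing $2\tau^\alpha [u]_{\alpha;Q_R}$. The new ingredient compared to the elliptic case is to route the middle term through the intermediate point $(y,t)$:
\begin{equation*}
\abs{u_\tau(X) - u_\tau(Y)} \le \abs{u_\tau(x,t) - u_\tau(y,t)} + \abs{u_\tau(y,t) - u_\tau(y,s)} \le \abs{\p_x u_\tau}_{0;Q_R}\abs{x-y} + \abs{\p_t u_\tau}_{0;Q_R}\abs{t-s}.
\end{equation*}
Since the parabolic distance is built so that $\abs{x-y} \le d(X,Y)$ and $\abs{t-s} \le d(X,Y)^2$, the spatial piece scales like $d(X,Y)$ and the temporal piece like $d(X,Y)^2$; this is precisely the matching needed to convert these two bounds into the supremum quantities $\tau^{1-\alpha}\abs{\p_x u_\tau}_{0;Q_R}$ and $\tau^{2-\alpha}\abs{\p_t u_\tau}_{0;Q_R}$ once we set $\tau = \epsilon\, d(X,Y)$.

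Dividing by $d(X,Y)^\alpha$ would then produce
\begin{equation*}
\frac{\abs{u(X)-u(Y)}}{d(X,Y)^\alpha} \le 2\epsilon^\alpha [u]_{\alpha;Q_R} + \epsilon^{\alpha - 1}\tau^{1-\alpha}\abs{\p_x u_\tau}_{0;Q_R} + \epsilon^{\alpha - 2}\tau^{2-\alpha}\abs{\p_t u_\tau}_{0;Q_R}.
\end{equation*}
I would then choose $\epsilon$ small enough (the parabolic analogue of the elliptic choice $\epsilon = 4^{-1/\alpha}$) so that the $2\epsilon^\alpha[u]_{\alpha;Q_R}$ term can be absorbed on the left, fix $\tau_0$ to be the matching fraction of the distance from $Q_R$ to $\p U$ so that $\tau = \epsilon\, d(X,Y)$ remains admissible, and pass to the supremum over $\tau \in (0,\tau_0)$. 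The principal obstacle is conceptual rather than computational: one must recognise that the parabolic distance is engineered precisely so that the broken-path decomposition lines up term-by-term with the two scaling exponents on the right-hand side, after which the rest of the argument is a direct transcription of Trudinger's elliptic scheme.
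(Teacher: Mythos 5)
Your proposal is correct and follows essentially the same route as the paper's proof: the upper bound via \eqref{e: Holder est 4} with $(i,j)=(1,0)$ and $(0,1)$, and the lower bound via the triangle inequality with the broken path through the intermediate point $(y,t)$, the choice $\tau = \epsilon\, d(X,Y)$ with $\epsilon = 4^{-1/\alpha}$, and absorption after taking the supremum. No substantive differences to report.
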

\begin{proof}
The second inequality follows directly from equation \eqref{e: Holder est 4} by choosing the appropriate values for the indices $i$ and $j$.  To prove the spatial part of the second inequality, choosing  $i = 1$ and $j = 0$ in estimate \eqref{e: Holder est 4} gives
\begin{equation*}
	\abs{ \p_x u_{\tau}(x,t) }_{ 0; \, Q_{ R } } \leq C \tau^{ \alpha - 1 } [u]_{\alpha; \, Q_{ R } }.
\end{equation*}
The temporal estimate follows similarly. Moving on to the proof of the first inequality, let $X,Y \in U_T $ be points such that $d(X,Y) < d_X/2$. To simplify notation a little, set $R = d_X/2$; thus $d(X,Y) \in Q_R \subset U_T$. Let $\tau$ satisfy $0 < \tau <\tau_0 < R$, where $\tau_0$ will be fixed shortly. For $d(X,Y) < R$,

\begin{align*}
		\abs{u(X) - u(Y)} &\leq \abs{u(X) - u_{\tau}(X)} + \abs{ u_{\tau}(Y) - u(Y) } + \abs{ u_{\tau}(x,t) - u_{\tau}(y,t) } + \abs{ u_{\tau}(y,t) - u_{\tau}(y,s) } \\
		&\leq 2\tau^{\alpha}[u]_{ \alpha; \, Q_{ R }} + \abs{x-y} \abs{ \p_x u_{\tau} }_{ 0; \, Q_{ R } } + \abs{t-s} \abs{ \p_t u_{\tau} }_{ 0; \, Q_{ R } }.
\end{align*}
Set $\tau = \epsilon d(X,Y)$, where $\epsilon < 1/2$.  Factoring out $d(X,Y)^{\alpha}$ we have
\begin{equation*}
	\abs{u(X) - u(Y)} \leq d(X,Y)^{\alpha} \left( 2\epsilon^{\alpha}[u]_{\alpha; \, Q_{ R }} + \epsilon^{\alpha-1}\tau^{1-\alpha}\abs{ \p_x u_{\tau} }_{ 0; \, Q_{ R } } + \epsilon^{\alpha-2}\tau^{2-\alpha}\abs{ \p_t u_{\tau} }_{ 0; \, Q_{ R } } \right).
\end{equation*}
Choose $\epsilon = 4^{-1/\alpha}$ then set $\tau_0 = 4^{-1/\alpha} \cdot \,  d_X/2$. Taking the supremum over $\tau \in (0, \tau_0)$ completes the proof.
\end{proof}
We now proceed similarly to Poisson's equation to derive the Schauder estimate for the nonhomongeneous heat equation. Again for simplicity, we only condisider solutions with compact support in a parabolic cylinder $Q_R$ as the more general estimates follow from this case using cutoff functions and Simon's adsorption lemma. Suppose that $u \in C_c^{2, \alpha}(Q_R)$ solves
\[ \p_t u(x,t) - a^{ij}(x,t)\p_{ij}u(x,t) = f(x,t), \]
where we assume $a^{ij}, f \in C^{\alpha}(Q_R)$, and there are constants $\lambda >0$ and $\Lambda < \infty$ such that $\lambda \abs{\xi}^2 \leq a^{ij}\xi_i\xi_j \leq \Lambda \abs{ \xi }^2$.    Again we freeze coefficients at a point $(x_0, t_0) \in Q_R$ and perform a coordinate transformation if necessary to get
\begin{align*}
	\p_t u(x,t) - \Delta u(x,t) - f(x_0, t_0) &= ((a^{ij}(x,t) - a^{ij}(x_0,t_0) ) \p_{ij}u(x,t) + f(x,t) - f(x_0,t_0) \\
	&:= g(x,t),
\end{align*}
and then mollify the equation to obtain
\begin{equation}
	\p_t u_{\tau}(x,t) - \Delta u_{\tau}(x,t) = g_{\tau}(x,t). \label{eqn: heat subsol 0}
\end{equation} 
Given the form of the norm equivalence, the desired Schauder estimate will follow if we can establish the estimates (for the spatial component of the Schauder estimate)
\begin{align}
	\abs{ \p_x^3 u_\tau(x, t) } &\leq C \left( \frac1R \osc_{ Q_R } \p_x^2 u + R^2\abs{ \p_x^3 g_{\tau} }_{0; \, Q_R} \right) \label{eqn: heat subsol 1} \\
	\abs{ \p_t\p_x^2 u_\tau(x, t) } &\leq C \left( \frac{1}{R^2} \osc_{ Q_R } \p_x^2 u + R^2\abs{ \p_t\p_x^2 g_{\tau} }_{0; \, Q_R} \right), \label{eqn: heat subsol 2}
\end{align}
and for the temporal part
\begin{align}
	\abs{ \p_x\p_t u_\tau(x, t) } &\leq C \left( \frac1R \osc_{ Q_R } \p_t u + R^2\abs{ \p_x\p_t g_{\tau} }_{0; \, Q_R} \right) \label{eqn: time subsol 1} \\
	\abs{ \p_t^2 u_\tau(x, t) } &\leq C \left( \frac{1}{R^2} \osc_{ Q_R } \p_t u + R^2\abs{ \p_t^2 g_{\tau} }_{0; \, Q_R} \right).  \label{eqn: time subsol 2}
\end{align}
We show how to obtain estimate \eqref{eqn: heat subsol 1} as the other three estimates follow in the same way. Recall the mean value property for subsolutions of the heat equation:  If $v$ is a subsolution to the heat equation on $\mathbb{R}^{d+1}$, that is if $v$ solves $\p_v - \Delta v \leq 0$, then $v$ satisfies
\begin{equation*}
	v(x, t) \leq \frac{1}{4r^n}\iint_{E(x,t;\,r)} v(y,s) \frac{ \abs{ x - y }^2 }{ (t-s)^2 } \, dy \, ds
\end{equation*}
for each $E(x,t; r) \subset \mathbb{R}^{d+1}$, where the heat ball $E(x,t;r)$ is the set given by $E(x,t;r) = \{ (y,s) \in \mathbb{R}^{d+1} : \abs{x-y}^2 \leq \sqrt{ -2\pi s \log[r^2/(-4\pi s)] }, s \in (t - r^2/(4\pi s), t) \}$. The radius of the heat ball $\sqrt{ -2\pi s \log[r^2/(-4\pi s)] }$ is often denoted by $R_r(s)$. We refer the interested reader to \cite{Eva} and \cite{Eck} for more information on the mean value property and heat balls.  Let us now show \eqref{eqn: heat subsol 1}:  Differentiate \eqref{eqn: heat subsol 0} thrice in space.  Since $\abs{ \p_x^3 g_{\tau} }_{0; \, E}\abs{x}^2/(2n)$ is independent of time we see
\begin{align*}
	\p_t \left( \p_x^3 u_{\tau} + \abs{ \p_x^3 g_{\tau} }_{0; \, E}\frac{ \abs{x}^2 }{2n} \right) - \Delta \left( \p_x^3 u_{\tau} + \abs{ \p_x^3 g_{\tau} }_{0; \, E}\frac{ \abs{x}^2 }{2n} \right) &= \p_t(\p_x^3 u_{\tau}) - \Delta (\p^3_x u_{\tau}) - \abs{ \p_x^3 g_{\tau} }_{0; \, E} \\
	&= \p_x^3 g_{\tau} - \abs{ \p_x^3 g_{\tau} }_{0; \, E} \leq 0,
\end{align*}
and hence the function $\p_x^3 u_{\tau} + \abs{ \p_x^3 g_{\tau} }_{0; \, E(x_0, t_0; r) } \abs{ x }^2 / (2n)$ is subsolution of the heat equation.  From the mean value property of subsolutions we have
\begin{equation}
\p_x^3 u_{\tau}(x,t) \leq \frac{1}{4r^n} \iint_{E(x,t;r)} \left( \p_y^3 u_{\tau}(y,s) + \abs{\p_x^3 g_{\tau}}_{0} \frac{ \abs{ y }^2 }{ 2n } \right) \frac{ \abs{ x - y }^2 }{ \abs{ t - s }^2 } \, dy ds. \label{eqn: heat subsol 4}
\end{equation}
By translating coordinates we can assume without loss of generality that $(x,t) = (0,0)$. Establishing the required estimates involves evaluation of an integral of the form
\[ \frac{1}{r^n}\int_{ \frac{-r^2}{4\pi} } \frac{ R_r(s)^{\alpha} }{ s^{\beta} } \, ds, \]
where $\alpha$ and $\beta$ are given integers.  The constants can be computed explicitly, however we are only interested in the scaling behaviour with respect to the radius $r$ (and that the integral is finite).  We compute
\begin{align*}
	\frac{1}{r^n}\int_{ \frac{-r^2}{4\pi} }^0 \frac{ R_r(s)^{\alpha} }{ s^{\beta} } \, ds &= \frac{1}{r^n}\int_{ \frac{-r^2}{4\pi} }^0 \frac{ \big( -2ns \log[ r^2/ (-4\pi s ) ] \big)^{ \alpha/2} }{ -s^{\beta} } \\
	&= C(n, \alpha, \beta) r^{-n+\alpha-2\beta+2} \int_{ \frac{1}{4\pi} }^0 t^{\alpha/2 - \beta} \big( \log( 4 \pi t) \big)^{\alpha/2} \,dt \\
	&= C(n, \alpha, \beta) r^{-n+\alpha-2\beta+2} \int_0^{\infty} s^{\alpha/2} e^{-\alpha/2-\beta+1} \, ds \\
	&= C(n, \alpha, \beta) r^{-n+\alpha-2\beta+2},
\end{align*}
where the last lines follows from properties of the Gamma function. We point that the integral is independent of $\alpha$ and $\beta$ when $\alpha=\beta=2$.  Returning to \eqref{eqn: heat subsol 4}, we have
\begin{equation}
	\p_x^3 u_{\tau}(x, t)  \leq 4r^{-n} \iint_E \p_y^3 u_{\tau}(y,s) \frac{ \abs{y}^2 }{ s^2 } \, dy ds + \frac{4r^{-n}}{2n}\abs{ \p_x^3 g_{ \tau } }_{0; \, E} \iint_E \frac{ \abs{y}^4 }{ s^2 } \, dy ds. \label{eqn: heat subsol 5}
\end{equation}
We estimate the first term on the right by
\begin{align*}
	4r^{-n} \iint_E \p_y^3 u_{\tau}(y,s) \frac{ \abs{y}^2 }{ s^2 } \, dy ds &\leq Cr^{-n} \int_{ \frac{ -r^2 }{4\pi} }^0 \frac{ R_r(s)^2 }{s^2} \left( \int_{B_{R_r(s)}} \p_y^3 u_{\tau} \, dy \right) ds \\
	&\leq Cr^{-n} \int_{ \frac{ -r^2 }{4\pi} }^0 \frac{ R_r(s)^2 }{s^2} \left( \int_{\p B_{R_r(s)}} \osc \p_y^2 u \, dy \right) ds \\
	&\leq Cr^{-n} \osc_E \p_x^2 u \int_{ \frac{ -r^2 }{4\pi} }^0 \frac{ R_r(s)^{n+1} }{s^2} ds\\
	&\leq \frac{ C(d) }{ r } \osc_E \p_x^2 u.
\end{align*}
The second term on the right of \eqref{eqn: heat subsol 5} can be estimated more simply to give
\[ 4\abs{ \p_x^3 g_{ \tau } }_{0; \, E} r^{-n} \iint_E \frac{ \abs{y}^4 }{ s^2 } \, dy ds \leq C(d) r^2 \abs{ \p_x^3 g_{ \tau } }_{0; \, E}. \]
The other three estimates \eqref{eqn: heat subsol 2} - \eqref{eqn: time subsol 2} follow in a similar way. For example, to derive estimate \eqref{eqn: heat subsol 2}, we differentiate \eqref{eqn: heat subsol 0} twice in space and once in time. Using the mean value property of subsolutions of the heat equation we obtain
\begin{equation}
	\p_t \p_x^2 u_{\tau}(x, t)  \leq 4r^{-n} \iint_E \p_s \p_y^2 u_{\tau}(y,s) \frac{ \abs{y}^2 }{ s^2 } \, dy ds + \frac{4r^{-n}}{2n}\abs{ \p_t \p_x^2 g_{ \tau } }_{0; \, E} \iint_E \frac{ \abs{y}^4 }{ s^2 } \, dy ds. \label{eqn: heat subsol 5}
\end{equation}
The first part of the estimate follows by integrating by parts in time:
\begin{align*}
	4r^{-n} \iint_E \p_t\p_y^2 u_{\tau}(y,s) \frac{ \abs{y}^2 }{ s^2 } \, dy ds &\leq Cr^{-n} \iint_E \osc \p_y^2 u \frac{ \abs{y}^2 }{ s^3 } \\
	&\leq \frac{ C(d) }{ r^2 } \osc_E \p_x^2 u,
\end{align*}
and the second term on right can be estimate in the same way as before to give \eqref{eqn: heat subsol 2}.

The derivation of the parabolic Schauder estimate now continues in the the same way as for the Poisson equation, using the estimates \eqref{eqn: heat subsol 1} - \eqref{eqn: time subsol 2}, the equivalence of norms lemma and the H\"{o}lder space interpolation inequality. After completing these calculations we obtain obtain the desired parabolic Schauder estimate:
\begin{equation}
	[\p^{2,1} u ]_{\alpha; \, Q_R } \leq C \big( [ f ]_{\alpha; \, Q_R }+ \abs{ u }_{0; \, Q_R } \big), \label{eqn: eqn Schauder}
\end{equation}
where $C$ depends on $d, \lambda, \Lambda$, and $\alpha$.

\begin{bibdiv}
\begin{biblist}

\bib{Eck}{book}{
author={Ecker, Klaus},
	label={Eck}
   title={Regularity theory for mean curvature flow},
   series={Progress in Nonlinear Differential Equations and their
   Applications, 57},
   publisher={Birkh\"auser Boston Inc.},
   place={Boston, MA},
   date={2004},
   pages={xiv+165},
}

\bib{Eva}{book}{
 author={Evans, Lawrence C.},
 label={Eva},
   title={Partial differential equations},
   series={Graduate Studies in Mathematics},
   volume={19},
   publisher={American Mathematical Society},
   place={Providence, RI},
   date={1998},
   pages={xviii+662},
  }
  
   \bib{Sim}{article}{
   author={Simon, Leon},
   title={Schauder estimates by scaling},
   label={Sim}
   journal={Calc. Var. Partial Differential Equations},
   volume={5},
   date={1997},
   number={5},
   pages={391--407},
 }

 \bib{Tru}{article}{
 author={Trudinger, Neil S.},
 label={Tru}
   title={A new approach to the Schauder estimates for linear elliptic
   equations},
   conference={
      title={},
      address={North Ryde},
      date={1986},
   },
   book={
      series={Proc. Centre Math. Anal. Austral. Nat. Univ.},
      volume={14},
      publisher={Austral. Nat. Univ.},
      place={Canberra},
   },
   date={1986},
   pages={52--59},
 }

\bib{Wan}{article}{
 author={Wang, Xu Jia},
 label={Wan}
   title={Schauder estimates for solutions to second-order linear parabolic
   equations},
   language={Chinese},
   journal={J. Partial Differential Equations Ser. B},
   volume={1},
   date={1988},
   number={2},
   pages={17--34},
 }
 
\end{biblist}
\end{bibdiv}

\end{document}